\documentclass[reqno, 11pt]{amsart}
\usepackage{dsfont}
\usepackage{amssymb}
\usepackage{mathrsfs}
\usepackage{enumitem}
\usepackage{comment}
\usepackage{xurl}
\usepackage[backend=biber, style=alphabetic, url=false]{biblatex}
\AtBeginBibliography{\raggedright}

\usepackage[pdfstartview=FitH, pdfborder={0 0 0}, colorlinks=true, citecolor=blue, linkcolor=blue, urlcolor=blue]{hyperref}
\usepackage{aliascnt}

\theoremstyle{plain}
\newtheorem{theorem}{Theorem}[section]

\newaliascnt{conjecture}{theorem}

\aliascntresetthe{conjecture}

\newaliascnt{corollary}{theorem}
\newtheorem{corollary}[corollary]{Corollary}
\aliascntresetthe{corollary}

\newaliascnt{lemma}{theorem}
\newtheorem{lemma}[lemma]{Lemma}
\aliascntresetthe{lemma}

\newaliascnt{fact}{theorem}

\aliascntresetthe{fact}

\newaliascnt{claim}{theorem}

\aliascntresetthe{claim}

\newaliascnt{proposition}{theorem}

\aliascntresetthe{proposition}

\theoremstyle{definition}

\newaliascnt{definition}{theorem}

\aliascntresetthe{definition}

\newaliascnt{example}{theorem}

\aliascntresetthe{example}

\theoremstyle{remark}

\newaliascnt{remark}{theorem}
\newtheorem{remark}[remark]{Remark}
\aliascntresetthe{remark}

\numberwithin{equation}{section}
\DeclareMathOperator{\Ric}{Ric}

\DeclareMathOperator{\Vol}{Vol}
\DeclareMathOperator{\tr}{tr}
\DeclareMathOperator{\Pf}{Pf}
\DeclareMathOperator{\Id}{Id}
\DeclareMathOperator{\sgn}{sgn}
\newcommand{\CP}{\mathbb{CP}}

\newcommand{\sca}{\mathrm{Scal}}
\begin{document}

\title[Total scalar curvature]{Total scalar curvature under a curvature operator lower bound}
\author[J. Ge]{Jian Ge}
\address[Ge]{School of Mathematical Sciences, Laboratory of Mathematics and Complex Systems, Beijing Normal University, Beijing 100875, P. R. China.}
\email{jge@bnu.edu.cn}
\thanks{NSFC 12371049 and the Fundamental Research Funds for the Central Universities.}

\author[C. Li]{Chuanhuan Li}
\address[C. Li]{Shanghai Institute for Mathematics and Interdisciplinary Sciences (SIMIS), Shanghai 200433, China \newline
${\quad}$ Research Institute of Intelligent Complex Systems, Fudan University, Shanghai 200433, China}
\email{chli@simis.cn}
\thanks{No.2026M793350}

\author[R. Li]{Ronggang Li}
\address[R. Li]{School of Mathematics and Statistics, Nanjing University of Information Science and Technology, Nanjing, China}
\email{003705@nuist.edu.cn}
\thanks{Open Project Program of the Key Laboratory of Mathematics and Complex System at Beijing Normal University (Grant No. 202501)}

\subjclass[2020]{Primary 53C20; Secondary 53C21, 53C25}
\keywords{Total scalar curvature, curvature operator, Chern--Gauss--Bonnet formula}
\begin{abstract}
	Let $(M^n, g)$ be a complete, simply connected Riemannian manifold without boundary, of dimension $n\ge3$, with curvature operator at least that of the unit sphere. We prove that
	\begin{equation*}
		\int_M\sca(x)\,d\Vol_x\le n(n-1)\omega_n,
	\end{equation*}
	where $\omega_n$ is the volume of the unit $n$-sphere. Equality holds if and only if $(M,g)$ is isometric to the unit round sphere. In fact, we obtain a stronger bound containing $\Vol(M, g)$. In even dimensions, the proof follows from the Chern--Gauss--Bonnet formula. In odd dimensions, we apply the corresponding boundary formula to Deruelle's Ricci expander filling.
\end{abstract}
\maketitle

\section{Introduction}\label{sec:introduction}

Comparison geometry shows that a positive lower curvature bound controls the diameter and volume of a Riemannian manifold. For instance, if a complete $n$-dimensional manifold satisfies $\sec_g\ge1$, then the Bonnet--Myers theorem and Bishop--Gromov comparison give
\begin{equation*}
	\operatorname{diam}(M,g)\le\pi,
	\qquad \Vol(M,g)\le\omega_n,
\end{equation*}
where $\omega_n$ denotes the volume of the unit round sphere $\mathbb S^n(1)$. On the other hand, larger positive sectional curvature tends to make nearby geodesics focus more rapidly. This suggests that regions of very large curvature should occupy correspondingly small volume. Understanding the balance between scalar curvature and volume is therefore a natural problem in comparison geometry.

An important result in this direction is Petrunin's estimate \cite{Pet2009}. If $(M^n,g)$ is complete and $\sec_g\ge-1$, then, for every $p\in M$,
\begin{equation*}
	\int_{B_1(p)}\sca(x)\,d\Vol_x\le C(n),
\end{equation*}
where $C(n)$ depends only on the dimension. No positive lower bound for the volume is required. For closed manifolds, this gives an upper bound for the total scalar curvature in terms of the dimension, diameter, and a lower bound for sectional curvature. In particular, under $\sec_g\ge1$, the diameter bound above gives an upper bound depending only on $n$. Li \cite{Li2025} gave an alternative proof using a finite stratified covering directly on the given manifold, see also the recent extension to Alexandrov spaces \cite{Li2026}.

These estimates lead to the question of the optimal upper bound. For $\sec_g\ge1$, Li conjectured that the unit round sphere gives the sharp bound; see \cite[Conjecture~1.14]{Li2026}. In this paper, we prove this bound under a lower bound for the curvature operator and obtain a stronger inequality that also contains the volume of $M$.

We write $\sca(x)$ for scalar curvature and $d\Vol_x$ for the Riemannian volume element. The curvature operator $R$ acts on $\Lambda^2T^*M$ and is normalized so that the unit round sphere has curvature operator $\Id$. We add a metric subscript when comparing different metrics.

\begin{theorem}\label{thm:main}
	Let $(M^n,g)$ be a complete, simply connected Riemannian manifold without boundary, of dimension $n\ge3$. If $R\ge\Id$, then $M$ is compact and
	\begin{equation}\label{eq:main-bound}
		\begin{aligned}
		\int_M\sca(x)\,d\Vol_x
		&\le2(n-1)\omega_n+(n-1)(n-2)\Vol(M,g)\\
		&\le n(n-1)\omega_n.
		\end{aligned}
	\end{equation}
	Equality in the last bound holds if and only if $(M,g)$ is isometric to the unit round sphere $\mathbb S^n(1)$.
\end{theorem}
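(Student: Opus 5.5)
The plan is to compare the Gauss--Bonnet integrand with that of the round sphere, using the hypothesis $R\ge\Id$ in the form $R=\Id+S$ with $S\ge0$ a nonnegative algebraic curvature operator. First, compactness follows from Bonnet--Myers, since $R\ge\Id$ implies $\sec_g\ge1$. Next, $R>0$ together with simple connectivity gives, by B\"ohm--Wilking, that $M$ is diffeomorphic to $\mathbb S^n$. In particular $\chi(M)=2$ when $n$ is even. The last step, from the first bound in \eqref{eq:main-bound} to $n(n-1)\omega_n$, is simply $\Vol(M,g)\le\omega_n$ (Bishop--Gromov). Equality there forces $\Vol(M,g)=\omega_n$, and the rigidity case of Bishop's inequality then gives the round sphere. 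So everything reduces to proving the first inequality.

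\emph{Even $n$.} Write the Chern--Gauss--Bonnet formula as $\chi(M)=\frac{2}{\omega_n}\int_M P(R)\,d\Vol$, where $P$ is the normalized Pfaffian. It is homogeneous of degree $n/2$ in $R$, with $P(\Id)=1$. Expand $P(\Id+S)$ as a polynomial in $S$. Each coefficient of $S^k$ is a mixed Pfaffian, essentially the $2k$-th Gauss--Bonnet--Weyl (Lovelock) curvature of $S$.

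The key pointwise claim is that all terms of order $\ge2$ are nonnegative for $S\ge0$. Hence
\[
P(\Id+S)\ge 1+dP_{\Id}(S)=1+\frac{\sca-n(n-1)}{2(n-1)}.
\]
The linear term is computed from $\sca=2\tr R$, which gives $\tr S=\tfrac12(\sca-n(n-1))$, together with $dP_{\Id}(S)=\frac{n}{2}\tr S/\binom n2$.

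Integrating and using $\chi=2$ gives
\[
\omega_n\ge\Vol+\frac{\int\sca-n(n-1)\Vol}{2(n-1)},
\]
which rearranges exactly to $\int\sca\le2(n-1)\omega_n+(n-1)(n-2)\Vol$.

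The nonnegativity of the higher Lovelock curvatures of a nonnegative curvature operator is the main obstacle. I would prove it by writing each such term as a full contraction of $S^{\otimes k}$ against a product of generalized Kronecker deltas. I would then diagonalize $S=\sum\lambda_a\,\phi_a\otimes\phi_a$ with $\lambda_a\ge0$ and $\phi_a\in\Lambda^2$. This reduces the claim to the positivity of $\phi_{a_1}\wedge\cdots\wedge\phi_{a_k}\wedge\ast(\phi_{a_1}\wedge\cdots\wedge\phi_{a_k})\ge0$, after wedging with the appropriate powers of the sphere's curvature form.

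\emph{Odd $n$.} Here I would take Deruelle's Ricci expander filling $(W^{n+1},h)$. It is a ball ($\chi(W)=1$) with $\partial W=M$. Its curvature operator is nonnegative and is pinched relative to the hyperbolic or cone model. The boundary is arranged (after rescaling a level set) to have second fundamental form controlled from below by the metric, ideally $\mathrm{II}\ge g$ with $R_g-\mathrm{II}\wedge\mathrm{II}\ge0$ by the Gauss equation.

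Apply Chern--Gauss--Bonnet with boundary: $1=\int_W P(R_h)+\int_M Q(R_g,\mathrm{II})$. The interior term is $\ge0$ by the even-dimensional positivity lemma. Expand the transgression $Q$ as a polynomial in $S=R_g-\Id$ and $\mathrm{II}$, arranging that the model case (round sphere bounding the flat ball, $\mathrm{II}=g$) gives $1$. The same positivity argument for the terms of order $\ge2$ then yields the same linear inequality $1\ge\frac1{\omega_n}\bigl(\Vol+\frac{\int\sca-n(n-1)\Vol}{2(n-1)}\bigr)$.

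The delicate point is verifying that Deruelle's filling supplies boundary data and an interior sign strong enough for this expansion. I would handle it by passing to the limit along the expanding soliton's level sets, where the metric becomes asymptotically conical over $(M,g)$.
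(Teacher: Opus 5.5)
Your proposal is correct and follows the paper's proof essentially step for step. The shared steps are: Bonnet--Myers and B\"ohm--Wilking; in even dimensions, expanding the Pfaffian about $\Id$ with eigenform positivity of the higher terms; in odd dimensions, Deruelle's expander with the boundary Chern--Gauss--Bonnet formula on sublevel balls of $f$ and a scale-invariant limit to the unit cone slice, where $\mathrm{II}=g$ exactly, so no finite-level bound such as $\mathrm{II}\ge g$ is needed; and Bishop--Gromov for the last inequality and rigidity. The one step you omit is that Deruelle's published theorem requires $R>\Id$ strictly, so the paper proves the odd case for $g_a=ag$ with $0<a<1$, obtaining $a^{(n-2)/2}\int_M\sca\,d\Vol\le2(n-1)\omega_n+(n-1)(n-2)a^{n/2}\Vol(M,g)$, and then lets $a\uparrow1$.
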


In dimension three, every two-form is decomposable, so $R\ge\Id$ is equivalent to $\sec_g\ge1$. Thus \autoref{thm:main} gives the following sharp estimate.

\begin{corollary}\label{cor:dimension-three}
	Let $(M^3,g)$ be a complete, simply connected Riemannian manifold without boundary. If $\sec_g\ge1$, then
	\begin{equation*}
		\int_M\sca(x)\,d\Vol_x\le8\pi^2+2\Vol(M,g)\le12\pi^2.
	\end{equation*}
	Equality in the last bound holds if and only if $(M,g)$ is isometric to $\mathbb S^3(1)$.
\end{corollary}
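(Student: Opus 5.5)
The plan is to deduce the corollary directly from \autoref{thm:main} with $n=3$. The only geometric input needed is that, in dimension three, the hypothesis $\sec_g\ge1$ is equivalent to $R\ge\Id$.

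For this equivalence, fix $x\in M$ and an orthonormal basis of $T_xM$. With the normalization in which the unit sphere has $R=\Id$, the sectional curvature of the plane spanned by orthonormal $u,v$ is $\sec(u,v)=\langle R(u\wedge v),u\wedge v\rangle$, and $|u\wedge v|=1$. Hence $R\ge\Id$, i.e.\ $\langle R\omega,\omega\rangle\ge|\omega|^2$ for all $\omega\in\Lambda^2T_x^*M$, trivially implies $\sec_g\ge1$.

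For the converse, I use that in dimension three the Hodge star identifies $\Lambda^2T_xM$ with $T_xM$. Thus any nonzero $\omega$ equals $|\omega|\,{*v}$ for a unit vector $v$. Choosing an orthonormal basis $e_1,e_2$ of $v^\perp$ with the right orientation, we get ${*v}=e_1\wedge e_2$. So every two-form is decomposable, $\omega=|\omega|\,e_1\wedge e_2$. Since the quadratic form $\omega\mapsto\langle R\omega,\omega\rangle$ is homogeneous of degree two, this gives
\begin{equation*}
\langle R\omega,\omega\rangle=|\omega|^2\sec(e_1,e_2)\ge|\omega|^2 .
\end{equation*}
Hence $R\ge\Id$.

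The remaining hypotheses (completeness, simple connectivity, no boundary, $n=3\ge3$) match those of \autoref{thm:main}, so \eqref{eq:main-bound} applies. Using $\omega_3=\Vol(\mathbb S^3(1))=2\pi^2$, the constants become
\begin{equation*}
2(n-1)\omega_n=4\cdot2\pi^2=8\pi^2,\qquad (n-1)(n-2)=2,\qquad n(n-1)\omega_n=6\cdot2\pi^2=12\pi^2 .
\end{equation*}
This yields both displayed inequalities. The equality characterization is inherited verbatim from \autoref{thm:main}. There is no real obstacle; the only step needing care is the decomposability argument, which I have written out above.
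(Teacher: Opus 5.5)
Your proposal is correct and follows the paper's route. The paper likewise notes that every two-form is decomposable in dimension three, so $R\ge\Id$ is equivalent to $\sec_g\ge1$, and then specializes \autoref{thm:main} to $n=3$ with $\omega_3=2\pi^2$; your Hodge-star argument supplies the one-line justification the paper leaves implicit.
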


The proof of \autoref{thm:main} uses the Chern--Gauss--Bonnet formula \cite{Che1945}. We expand the relevant curvature polynomial about the constant sectional curvature one tensor. The constant and linear terms give the volume and scalar curvature integral, while all higher-order terms are nonnegative under the curvature-operator hypothesis; compare \cite{BK1978}.

In even dimensions, the sphere theorem of B\"ohm--Wilking \cite{BW2008} implies that $M$ is diffeomorphic to $\mathbb S^n$, so $\chi(M)=2$. Integrating the expansion of the Euler density and discarding the nonnegative higher-order terms yields the first inequality in \eqref{eq:main-bound}.

In odd dimensions, the geometric idea is to realize $(M,g)$ as the cross section at infinity of a smooth manifold of one higher dimension. For $R>\Id$, Deruelle's existence theorem \cite{Der2016} provides a complete expanding Ricci soliton $(N^{n+1},G)$ with positive curvature operator, diffeomorphic to $\mathbb R^{n+1}$ and smoothly asymptotic to the metric cone $dr^2+r^2g$. Thus $(M,g)$ is isometric to the unit cross section of the tangent cone at infinity of $(N,G)$: after suitable identifications, the induced metrics on large boundaries, divided by the square of their radial parameter, converge smoothly to $g$.

The role of this filling is to provide an upper bound for the limiting boundary integral. On each smooth domain of a ball exhaustion, the classical Chern--Gauss--Bonnet formula writes the Euler characteristic $1$ as the sum of the interior Euler integral and the boundary integral. The interior term is nonnegative and is discarded. Passing to infinity therefore gives
\begin{equation*}
	\int_M b_g\,d\Vol_x\le1,
\end{equation*}
where $b_g\,d\Vol_x$ is the boundary form computed on the unit slice of the metric cone. For the scalar curvature bound, the filling is used only to establish this boundary inequality. The remaining curvature calculation takes place entirely on $(M,g)$: expanding $b_g$ gives the volume and total scalar curvature terms, together with nonnegative higher-order terms. The case $R\ge\Id$ follows by applying this argument to $g_a=ag$, $0<a<1$, and letting $a\uparrow1$.

The Euler-density expansion and the positivity of its higher-order terms are established in \autoref{sec:preliminaries}; the boundary expansion is computed in \autoref{sec:odd}. Finally, Bishop--Gromov volume comparison and its equality case give the second inequality in \eqref{eq:main-bound} and the rigidity statement.

The proof of \autoref{thm:main} is given in \autoref{sec:proof}.

\begin{remark}[The K\"ahler case]\label{rem:kahler-comparison}
	Let $g$ be a K\"ahler metric on $\CP^m$ with its standard complex structure, $m\ge2$. If $\sca(x)\ge4m(m+1)$, then
	\begin{equation*}
		\int_{\CP^m}\sca(x)\,d\Vol_x\le\frac{4m(m+1)\pi^m}{m!},
	\end{equation*}
	with equality exactly for the Fubini--Study metric $g_{\mathrm{FS}}$ of holomorphic sectional curvature $4$, up to holomorphic isometry.

	Indeed, on a closed K\"ahler manifold $(M^{2m},J,g)$ with K\"ahler form $\omega$,
	\begin{equation*}
		\int_M\sca(x)\,d\Vol_x
		=\frac{4\pi}{(m-1)!}\bigl\langle c_1(M,J)\cup [\omega]^{m-1},[M]\bigr\rangle.
	\end{equation*}
	Thus the total scalar curvature is fixed within each K\"ahler class. On $\CP^m$, let $\omega_{\mathrm{FS}}$ be the K\"ahler form of $g_{\mathrm{FS}}$. Writing $[\omega]=a[\omega_{\mathrm{FS}}]$, $a>0$, gives
	\begin{equation*}
		\Vol(\CP^m,g)=\frac{\pi^m}{m!}a^m,
		\qquad
		\int_{\CP^m}\sca(x)\,d\Vol_x=\frac{4m(m+1)\pi^m}{m!}a^{m-1}.
	\end{equation*}
	The scalar curvature lower bound therefore implies $a\le1$, proving the estimate. Equality forces $a=1$ and $\sca\equiv4m(m+1)$. The $\partial\bar\partial$-lemma then makes the Ricci potential harmonic, so $g$ is K\"ahler--Einstein. Bando--Mabuchi uniqueness \cite{BM1987} gives the stated rigidity.

	The same bound and rigidity hold for any closed K\"ahler manifold of complex dimension $m\ge2$ with $\sec_g\ge1$, by the classification theorem for positive orthogonal bisectional curvature \cite{FLW2017} and unitary averaging of the curvature tensor.
\end{remark}

\begin{remark}[Ricci curvature bounds]\label{rem:recent-ricci-counterexamples}
	The bound in \autoref{cor:dimension-three} does not extend to metrics satisfying only $\Ric_g\ge2g$. Indeed, Xu's closed-manifold construction \cite{Xu2026} can be arranged to yield, after normalization, metrics on $\mathbb S^3$ with this Ricci lower bound and arbitrarily large total scalar curvature.
\end{remark}

\section{Preliminaries}\label{sec:preliminaries}

\subsection{The Chern--Gauss--Bonnet formula and the Pfaffian expansion}\label{sec:pfaffian-expansion}

For a closed oriented Riemannian manifold $(M^{2m},g)$, the Chern--Gauss--Bonnet theorem \cite{Che1945} states that
\begin{equation}\label{eq:closed-gauss-bonnet}
	\chi(M)=\int_M e_g\,d\Vol_x,
	\qquad e_g\,d\Vol_x=\Pf\left(\frac{\Omega^R}{2\pi}\right),
\end{equation}
where $\Omega^R$ is the matrix of curvature two-forms of the Levi-Civita connection. We first expand this Pfaffian about the curvature of the unit sphere. The same algebraic coefficients will also be used on odd-dimensional boundaries, so we give their definitions in arbitrary dimension $n$.

Fix a local oriented orthonormal frame $e_1,\ldots,e_n$ with dual coframe $\theta^1,\ldots,\theta^n$, and write $\theta^{ij}=\theta^i\wedge\theta^j$. For an algebraic curvature operator $A$ on $\Lambda^2T_x^*M$, set
\begin{equation*}
	A_{ijkl}=\langle A(\theta^{ij}),\theta^{kl}\rangle,
	\qquad
	\Omega^A_{ij}=\frac12\sum_{k,l=1}^n A_{ijkl}\theta^k\wedge\theta^l
	=\sum_{k<l}A_{ijkl}\theta^{kl}.
\end{equation*}
Thus $\Omega^A_{ij}=A(\theta^{ij})$. The factor $1/2$ appears only when the sum runs over all ordered pairs $(k,l)$. Our convention gives $\sec_g(\operatorname{span}\{e_i,e_j\})=R_{ijij}$ and $\sca(x)=2\sum_{i<j}R_{ijij}$.

The identity operator $\Id$ on $\Lambda^2T_x^*M$ corresponds to the constant sectional curvature one tensor, so $\Omega^{\Id}_{ij}=\theta^{ij}$. Setting $E:=R-\Id$, we have
\begin{equation}\label{eq:curvature-splitting}
	R=\Id+E,
	\qquad \Omega^R_{ij}=\theta^{ij}+\Omega^E_{ij},
	\qquad 2\tr E=\sca(x)-n(n-1).
\end{equation}
Thus the curvature-operator hypothesis $R\ge\Id$ is equivalent to $E\ge0$.

Write $\sigma_i=\sigma(i)$ and $\varepsilon_\sigma=\sgn(\sigma)$. For $0\le j\le\lfloor n/2\rfloor$, define the homogeneous curvature polynomial $H_j(A)$ by
\begin{equation}\label{eq:mixed-contraction}
	\begin{aligned}
		&n!H_j(A)\,d\Vol_x\\
		&\quad=\sum_{\sigma\in S_n}\varepsilon_\sigma
		\Omega^A_{\sigma_1\sigma_2}\wedge\cdots\wedge
		\Omega^A_{\sigma_{2j-1}\sigma_{2j}}\wedge
		\theta^{\sigma_{2j+1}}\wedge\cdots\wedge\theta^{\sigma_n}.
	\end{aligned}
\end{equation}
Empty products have their usual meaning. These alternating contractions are independent of the chosen orthonormal frame and are the normalized Gauss--Bonnet--Weyl curvature polynomials, also called Lipschitz--Killing curvature polynomials. Directly from the definition,
\begin{equation}\label{eq:lowest-coefficients}
	H_0(A)=1,
	\qquad H_1(A)=\frac{2\tr A}{n(n-1)},
	\qquad H_j(c\Id)=c^j.
\end{equation}
Indeed, for $j=1$ only the coefficient of $\theta^{\sigma_1\sigma_2}$ in $\Omega^A_{\sigma_1\sigma_2}$ contributes, and each diagonal entry $A_{ijij}$ with $i<j$ occurs $2(n-2)!$ times. For $A=c\Id$, each signed summand in \eqref{eq:mixed-contraction} equals $c^j\,d\Vol_x$.

Now let $n=2m$. With this normalization, the Pfaffian in \eqref{eq:closed-gauss-bonnet} is
\begin{equation}\label{eq:euler-form}
	e_g\,d\Vol_x
	=\frac{1}{(2\pi)^m2^m m!}
	\sum_{\sigma\in S_n}\varepsilon_\sigma
	\Omega^R_{\sigma_1\sigma_2}\wedge\cdots\wedge
	\Omega^R_{\sigma_{n-1}\sigma_n}.
\end{equation}
Since $n!/((2\pi)^m2^m m!)=2/\omega_n$, substituting \eqref{eq:curvature-splitting} gives
\begin{equation}\label{eq:even-expansion}
	\begin{aligned}
		e_g&=\frac2{\omega_n}\sum_{j=0}^m\binom mj H_j(E)\\
		&=\frac2{\omega_n}+\frac{2\tr E}{(n-1)\omega_n}
		+\frac2{\omega_n}\sum_{j=2}^m\binom mj H_j(E)\\
		&=\frac{\sca(x)-(n-1)(n-2)}{(n-1)\omega_n}
		+\frac2{\omega_n}\sum_{j=2}^m\binom mj H_j(E).
	\end{aligned}
\end{equation}
The binomial coefficient counts the choices of the $j$ factors involving $E$: interchanging two index pairs is an even permutation, and two-forms commute under exterior multiplication. The constant and linear terms in \eqref{eq:even-expansion} give precisely the volume and total scalar curvature terms after integration. To obtain an upper bound for the latter, it remains to control the signs of the higher-order terms. Their signed expressions in \eqref{eq:mixed-contraction} do not make these signs apparent. The next lemma is where the hypothesis $E\ge0$ enters.

\subsection{Positivity of the curvature polynomials}\label{sec:mixed-coefficients}

The positivity below follows from the classical eigenform expansion of the Euler integrand; see Bourguignon--Karcher \cite[\S~7.3, p.~89]{BK1978}. We include the algebraic proof for completeness and to record our normalization. We use the Hodge operator $*$ with the convention $\alpha\wedge(*\beta)=\langle\alpha,\beta\rangle\,d\Vol_x$ for forms of the same degree.

\begin{lemma}\label{lem:mixed-positivity}
	Let $n\ge2$ and let $A\ge0$ be an algebraic curvature operator on $\Lambda^2T_x^*M$, with an orthonormal eigenbasis $\{\eta_a\}_{a=1}^{\binom n2}$ and eigenvalues $\lambda_a\ge0$. For $0\le j\le\lfloor n/2\rfloor$,
	\begin{equation}\label{eq:coefficient-square}
		H_j(A)=\frac{2^j(n-2j)!}{n!}
		\sum_{a_1,\ldots,a_j}\lambda_{a_1}\cdots\lambda_{a_j}
		|\eta_{a_1}\wedge\cdots\wedge\eta_{a_j}|^2\ge0.
	\end{equation}
	The sum is over ordered tuples, with repetitions allowed and the usual empty-product convention for $j=0$. In particular, in even dimension the Pfaffian density of a nonnegative curvature operator is nonnegative, and it is positive if the operator is positive definite.
\end{lemma}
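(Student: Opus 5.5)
The plan is to rewrite the defining sum \eqref{eq:mixed-contraction} intrinsically, as a wedge product of the two-forms $\Omega^A_{ij}=A(\theta^{ij})$ with complementary coframe monomials. Then I will insert the spectral decomposition of $A$ and recognize a Hodge inner product.

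First, group the permutation sum. Fix the unordered pairs $\{\sigma_1,\sigma_2\},\dots,\{\sigma_{2j-1},\sigma_{2j}\}$ and the remaining indices. Summing over the signs and orders inside the last block $\theta^{\sigma_{2j+1}}\wedge\cdots\wedge\theta^{\sigma_n}$ should give
\begin{equation*}
n!H_j(A)\,d\Vol_x=(n-2j)!\sum_{i_1,\dots,i_{2j}}\Omega^A_{i_1i_2}\wedge\cdots\wedge\Omega^A_{i_{2j-1}i_{2j}}\wedge *\bigl(\theta^{i_1}\wedge\cdots\wedge\theta^{i_{2j}}\bigr).
\end{equation*}
Here the sum runs over ordered distinct tuples; non-distinct tuples may be included since they vanish. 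I will use the identity $\theta^{I}\wedge *\theta^{I}=d\Vol_x$ together with the sign bookkeeping $\varepsilon_\sigma\,\theta^{\sigma_{2j+1}}\wedge\cdots\wedge\theta^{\sigma_n}=\pm *(\theta^{\sigma_1}\wedge\cdots\wedge\theta^{\sigma_{2j}})$ with the correct sign. Next I write $\sum_{i_1i_2}\theta^{i_1}\wedge\theta^{i_2}\otimes\Omega^A_{i_1i_2}=2\sum_{k<l}\theta^{kl}\otimes A(\theta^{kl})$. Expanding $\theta^{kl}=\sum_a\langle\theta^{kl},\eta_a\rangle\eta_a$ and using $A\eta_a=\lambda_a\eta_a$ gives $2\sum_a\lambda_a\,\eta_a\otimes\eta_a$, viewed as an element of $\Lambda^2\otimes\Lambda^2$. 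Consequently the sum above becomes
\begin{equation*}
2^j\sum_{a_1,\dots,a_j}\lambda_{a_1}\cdots\lambda_{a_j}\;(\eta_{a_1}\wedge\cdots\wedge\eta_{a_j})\wedge *(\eta_{a_1}\wedge\cdots\wedge\eta_{a_j}).
\end{equation*}
By the convention $\alpha\wedge*\beta=\langle\alpha,\beta\rangle d\Vol_x$, this equals $2^j\sum\lambda_{a_1}\cdots\lambda_{a_j}|\eta_{a_1}\wedge\cdots\wedge\eta_{a_j}|^2\,d\Vol_x$. That is exactly \eqref{eq:coefficient-square}. The linearity of $*$ and of the wedge in each slot justifies moving the tensor contraction through. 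Nonnegativity is then immediate from $\lambda_a\ge0$.

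The main obstacle is the combinatorial and sign step: checking that the grouped sum carries exactly the factor $(n-2j)!$ and no stray sign, so that the complementary monomial really is $*(\theta^{i_1}\wedge\cdots\wedge\theta^{i_{2j}})$. I will sanity-check the constant against \eqref{eq:lowest-coefficients}. For $A=\Id$ and $j=1$, $\sum_a|\eta_a|^2=\binom n2$, so the formula gives $2(n-2)!\binom n2/n!=1$, as required. For $j=0$ it gives $1$ trivially.

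For the final assertion, take $n=2m$ and $j=m$. The Pfaffian \eqref{eq:euler-form} is a positive multiple of $H_m(A)$, so it is nonnegative. If $A>0$, then all $\lambda_a>0$. Choosing $a_1,\dots,a_m$ with $\eta_{a_i}$ ranging over an eigenbasis, the sum contains $\sum\lambda_{a_1}\cdots\lambda_{a_m}|\eta_{a_1}\wedge\cdots\wedge\eta_{a_m}|^2\ge(\min\lambda)^m\sum|\eta_{a_1}\wedge\cdots\wedge\eta_{a_m}|^2$. The last sum is basis independent, because it is the full contraction of $(\sum_a\eta_a\otimes\eta_a)^{\otimes m}$. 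It therefore equals its value for the standard basis $\theta^{kl}$, which is positive since $\theta^{12}\wedge\theta^{34}\wedge\cdots\ne0$. This gives strict positivity.
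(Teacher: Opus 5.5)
Your proposal is correct and is essentially the paper's argument. The paper inserts the spectral decomposition $\Omega^A_{ij}=\sum_a\lambda_a\eta_a(e_i,e_j)\eta_a$ first and evaluates the alternation for each fixed tuple as $2^j(n-2j)!\,{*}w$; you group the permutations into $(n-2j)!\,{*}\theta^I$ first and then contract, which is the same computation in the other order, and your strict-positivity step via basis independence of $\sum_{a_1,\ldots,a_m}|\eta_{a_1}\wedge\cdots\wedge\eta_{a_m}|^2$ is just a more explicit version of the paper's remark that some wedge of eigenforms is nonzero.
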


\begin{proof}
	The case $j=0$ is immediate. For $j\ge1$, write
	\begin{equation*}
		\Omega^A_{ij}=\sum_a\lambda_a\eta_a(e_i,e_j)\eta_a.
	\end{equation*}
	For a fixed tuple set $w=\eta_{a_1}\wedge\cdots\wedge\eta_{a_j}$. The corresponding alternation in \eqref{eq:mixed-contraction} gives
	\begin{equation*}
		\sum_{\sigma\in S_n}\varepsilon_\sigma
		\prod_{r=1}^j\eta_{a_r}(e_{\sigma_{2r-1}},e_{\sigma_{2r}})
		\theta^{\sigma_{2j+1}}\wedge\cdots\wedge\theta^{\sigma_n}
		=2^j(n-2j)!\,{*}w.
	\end{equation*}
	Since $w\wedge(*w)=|w|^2\,d\Vol_x$, substitution and summation prove \eqref{eq:coefficient-square}. If $A>0$, all eigenvalues are positive and some such wedge product is nonzero, since the eigenforms form a basis of $\Lambda^2T_x^*M$. Thus $H_j(A)>0$, which gives the asserted strict positivity of the Pfaffian in the top degree.
\end{proof}

In even dimension $n=2m$, the higher-order terms in \eqref{eq:even-expansion} are nonnegative when $E\ge0$, and
\begin{equation}\label{eq:even-density-bound}
	e_g\ge\frac{\sca(x)-(n-1)(n-2)}{(n-1)\omega_n}.
\end{equation}
The same lemma applies to the mixed curvature contractions in the odd-dimensional boundary formula below.

\section{Proof of \texorpdfstring{\autoref{thm:main}}{Theorem 1.1}}\label{sec:proof}

Throughout this section, $(M^n,g)$ satisfies the hypotheses of \autoref{thm:main}. Bonnet--Myers implies that $M$ is compact. We first prove
\begin{equation}\label{eq:simply-connected-bound}
	\int_M\sca(x)\,d\Vol_x\le2(n-1)\omega_n+(n-1)(n-2)\Vol(M,g).
\end{equation}

\subsection{Even dimensions}\label{sec:even}

Let $n=2m\ge4$. Since $R\ge\Id>0$, the sphere theorem of B\"ohm--Wilking \cite{BW2008} implies that $M$ is diffeomorphic to $\mathbb S^n$, so $\chi(M)=2$. Integrating \eqref{eq:even-expansion} and using \eqref{eq:closed-gauss-bonnet} gives
\begin{equation}\label{eq:even-integral-identity}
	\begin{aligned}
		&\int_M\left(\sca(x)+2(n-1)\sum_{j=2}^m\binom mj H_j(E)\right)d\Vol_x\\
		&\qquad=2(n-1)\omega_n+(n-1)(n-2)\Vol(M,g).
	\end{aligned}
\end{equation}
Each term in the sum is nonnegative by \autoref{lem:mixed-positivity}. Discarding this sum proves \eqref{eq:simply-connected-bound}.

\subsection{Odd dimensions}\label{sec:odd}

Let $n=2m+1\ge3$, and first assume that $R>\Id$.

\subsubsection*{The filling}
By Deruelle's existence theorem \cite[Theorem~1.3]{Der2016}, there is a complete expanding gradient Ricci soliton $(N^{2m+2},G,f)$ with positive curvature operator, satisfying
\begin{equation*}
	\mathrm{Hess}_G f=\Ric_G+\frac12G,
\end{equation*}
and smoothly asymptotic to the cone $g_C=dr^2+r^2g$. In suitable coordinates near infinity,
\begin{equation}\label{eq:expander-asymptotics}
	f=\frac{r^2}{4},\qquad
	|(\nabla^{g_C})^{\ell}(G-g_C)|_{g_C}=O(r^{-2-\ell}),
	\qquad \ell\ge0.
\end{equation}
Here the normalization of $f$ and the smooth asymptotic estimates are those of \cite[Definition~1.2 and the subsequent discussion]{Der2016}. We use the strict hypothesis in the published Theorem~1.3; the non-strict case will follow by scaling.

The inequality $\mathrm{Hess}_G f\ge G/2$ gives, along a unit-speed minimizing geodesic from a fixed point $o$,
\begin{equation*}
	f(x)\ge f(o)-|\nabla^G f(o)|_Gd_G(o,x)+\frac14d_G(o,x)^2.
\end{equation*}
Thus $f$ is proper and attains its minimum. Strict convexity along geodesics implies that this minimum is its unique critical point; its Hessian is positive definite there. By the Morse lemma and the absence of other critical points, every sufficiently large sublevel
\begin{equation*}
	D_\rho=\{f\le\rho^2/4\}
\end{equation*}
is diffeomorphic to a closed ball. In particular, $\chi(D_\rho)=1$.

\subsubsection*{The boundary formula}
Let $(D^{2m+2},G)$ be a compact oriented Riemannian manifold with smooth boundary endowed with its induced orientation. Choose a local oriented orthonormal frame $e_1,\ldots,e_n$ on $\partial D$, with dual coframe $\theta^1,\ldots,\theta^n$, and let $\nu$ be the outward unit normal. Put
\begin{equation*}
	L_{ij}=G(\nabla^G_{e_i}\nu,e_j),\qquad \alpha_i=\sum_jL_{ij}\theta^j.
\end{equation*}
Thus a Euclidean unit sphere has $\alpha_i=\theta^i$. Write $\Omega^\partial_{ij}$ for the intrinsic curvature forms of the boundary and define the interpolating two-forms
\begin{equation}\label{eq:boundary-interpolation}
	\Omega_{ij}(t)=\Omega^\partial_{ij}-t^2\alpha_i\wedge\alpha_j.
\end{equation}
The classical Chern--Gauss--Bonnet boundary formula \cite{Che1945} takes the form
\begin{equation}\label{eq:boundary-gauss-bonnet}
	\chi(D)=\int_D e_G\,d\Vol_{G,x}+\int_{\partial D}T_G,
\end{equation}
where
\begin{equation}\label{eq:transgression-form}
	T_G=\frac{1}{(2\pi)^{m+1}2^m m!}
	\int_0^1\sum_{\sigma\in S_n}\varepsilon_\sigma
	\alpha_{\sigma_1}\wedge\Omega_{\sigma_2\sigma_3}(t)\wedge\cdots\wedge
	\Omega_{\sigma_{n-1}\sigma_n}(t)\,dt.
\end{equation}
Here $e_G$ is the interior Pfaffian density, with the normalization of \eqref{eq:euler-form} in dimension $2m+2$.

With the outward-normal convention above, the boundary form on the Euclidean unit sphere is $T_G=\omega_n^{-1}d\Vol_{\partial D}$, so that $\int_{\partial D}T_G=1$ for the Euclidean unit ball $D$.

\subsubsection*{The boundary density on the cone}
On the slice $r=1$ of $g_C=dr^2+r^2g$, with outward normal $\nu=\partial_r$, one has $\alpha_i=\theta^i$ and
\begin{equation*}
	\Omega_{ij}(t)=\Omega^E_{ij}+(1-t^2)\theta^{ij}.
\end{equation*}

Moving the single coframe factor in \eqref{eq:transgression-form} past two-forms introduces no sign. The contraction with $j$ factors of $\Omega^E$ is therefore $n!H_j(E)\,d\Vol_x$, as in \eqref{eq:mixed-contraction}. Expanding \eqref{eq:transgression-form} and integrating in $t$ gives $T_{g_C}=b_g\,d\Vol_x$ on this slice, where
\begin{equation}\label{eq:odd-expansion}
	\begin{aligned}
		b_g&:=\frac{n!}{(2\pi)^{m+1}2^m m!}
		\sum_{j=0}^m\binom mj H_j(E)\int_0^1(1-t^2)^{m-j}\,dt\\
		&=\frac1{\omega_n}\sum_{j=0}^m\binom{n/2}{j}H_j(E)\\
		&=\frac{\sca(x)-(n-1)(n-2)}{2(n-1)\omega_n}
		+\frac1{\omega_n}\sum_{j=2}^m\binom{n/2}{j}H_j(E).
	\end{aligned}
\end{equation}

The second line follows from $n=2m+1$ and the elementary identity
\begin{equation*}
	\int_0^1(1-t^2)^q\,dt=\frac{2^q q!}{(2q+1)!!},\qquad q=0,1,\ldots.
\end{equation*}
Here $\binom{x}{j}=x(x-1)\cdots(x-j+1)/j!$, with $\binom{x}{0}=1$. All coefficients in this range are positive. The constant and linear terms are $1/\omega_n$ and $\tr E/((n-1)\omega_n)$; the higher-order terms are nonnegative by \autoref{lem:mixed-positivity}. Consequently,
\begin{equation}\label{eq:odd-density-bound}
	b_g\ge\frac{\sca(x)-(n-1)(n-2)}{2(n-1)\omega_n}.
\end{equation}
In dimension three there are no higher-order terms, and
\begin{equation*}
	b_g=\frac{\sca(x)-2}{8\pi^2}.
\end{equation*}

\subsubsection*{Passing to infinity}
The Pfaffian of a positive semidefinite curvature operator is nonnegative by \autoref{lem:mixed-positivity}, applied in the interior dimension with the top value of $j$. In particular, $e_G\ge0$. Applying \eqref{eq:boundary-gauss-bonnet} to $D_\rho$ gives
\begin{equation}\label{eq:exhaustion-gauss-bonnet}
	\int_{\partial D_\rho}T_G
	=1-\int_{D_\rho}e_G\,d\Vol_{G,x}\le1.
\end{equation}
Let $g_\rho$ and $L_\rho$ be the induced metric and covariant second fundamental form of $\partial D_\rho$. After identifying this boundary with $M$, \eqref{eq:expander-asymptotics} implies
\begin{equation}\label{eq:boundary-convergence}
	\rho^{-2}g_\rho\longrightarrow g,\qquad
	\rho^{-1}L_\rho\longrightarrow g
\end{equation}
smoothly. Indeed, pulling the metric back to a fixed annulus by $r=\rho s$ and multiplying it by $\rho^{-2}$ gives smooth convergence to $ds^2+s^2g$. The level set $f=\rho^2/4$ is $s=1$ in these coordinates, so its induced metric and second fundamental form converge to those of the unit cone slice.

Constant rescaling leaves the integral in \eqref{eq:transgression-form} unchanged. The normalized boundary data in \eqref{eq:boundary-convergence} therefore give
\begin{equation*}
	\lim_{\rho\to\infty}\int_{\partial D_\rho}T_G=\int_M b_g\,d\Vol_x.
\end{equation*}
Taking the limit in \eqref{eq:exhaustion-gauss-bonnet} therefore gives
\begin{equation}\label{eq:boundary-integral-bound}
	\int_M b_g\,d\Vol_x\le1.
\end{equation}
Together with \eqref{eq:odd-density-bound}, this proves \eqref{eq:simply-connected-bound}. In fact, the positive curvature operator of $G$ gives $e_G>0$. Fixing a nonempty sublevel $D_{\rho_0}$ in \eqref{eq:exhaustion-gauss-bonnet} gives a positive lower bound for the interior integral for all $\rho\ge\rho_0$. Hence the boundary bound, and thus the first scalar curvature bound, is strict when $R>\Id$.

\subsection{The non-strict bound and rigidity}\label{sec:rigidity}

In odd dimensions, when only $R\ge\Id$ is assumed, apply the strict case to $g_a=ag$, $0<a<1$. Under the natural identification of curvature operators,
\begin{equation*}
	R_{g_a}=a^{-1}R\ge a^{-1}\Id>\Id.
\end{equation*}
The scaling laws $\sca_{g_a}=a^{-1}\sca$ and $d\Vol_{g_a,x}=a^{n/2}d\Vol_x$ give
\begin{equation*}
	a^{(n-2)/2}\int_M\sca(x)\,d\Vol_x
	\le2(n-1)\omega_n+(n-1)(n-2)a^{n/2}\Vol(M,g).
\end{equation*}
Letting $a\uparrow1$ proves \eqref{eq:simply-connected-bound}. The even-dimensional argument already applies directly to $E\ge0$.

Since $R\ge\Id$ gives $\Ric\ge(n-1)g$, Bishop--Gromov comparison yields $\Vol(M,g)\le\omega_n$. This proves the second inequality in \eqref{eq:main-bound}. Equality in the last bound forces $\Vol(M,g)=\omega_n$, so the equality case of volume comparison implies that $(M,g)$ is isometric to $\mathbb S^n(1)$. Conversely, the unit round sphere attains the bound. This proves \autoref{thm:main}.

For the strict curvature-operator hypothesis, the sphere constant remains the optimal supremum. Indeed, the round metric $g_k=k^{-1}g_{\mathbb S^n(1)}$ has curvature operator $k\Id$ and
\begin{equation*}
	\int_{\mathbb S^n}\sca_{g_k}(x)\,d\Vol_{g_k,x}
	=n(n-1)\omega_n k^{1-n/2}\longrightarrow n(n-1)\omega_n
	\quad\text{as }k\downarrow1.
\end{equation*}

The corresponding statements for non-simply-connected manifolds follow immediately by passing to the finite universal Riemannian covering. Both the scalar curvature integral and the volume are multiplied by the covering degree, so the sphere constant is divided by that degree. Equality then characterizes unit-curvature spherical space forms.

\subsection*{Acknowledgements}
We thank Nan Li for suggesting the problem studied in this paper. The first author thanks Guoyi Xu for helpful discussions concerning Xu’s counterexample cited above. During these discussions, he learned that Xu had also obtained related curvature integral estimates.

\noindent\textbf{AI assistance.} The authors acknowledge the use of AI tools as a conversational tool during the development and refinement of several ideas in this work, the paper was written by the three authors.

\printbibliography
\end{document}